\newcommand{\alg}{\mathcal{A}}
\newcommand{\anch}{\rho}
\newcommand{\cov}{\nabla}
\DeclareMathOperator{\End}{End}
\newcommand{\eps}{\epsilon}
\newcommand{\lie}{\mathcal{L}}
\newcommand{\R}{\mathbb{R}}
\newcommand{\shuffle}{\mathrm{Sh}}
\newcommand{\smooth}{\mathcal{C}^\infty}
\newcommand{\wdd}{\overline{\wedge}}
\newtheorem{theorem}{Theorem}
\newtheorem{proposition}[theorem]{Proposition}
\theoremstyle{Definition}
\newtheorem{definition}[theorem]{Definition}
\title[Covariant Lie derivatives and FN bracket on Lie Algebroids]{Covariant Lie derivatives and Fr\"olicher-Nijenhuis bracket on Lie Algebroids}
\author[A. De Nicola]{Antonio De Nicola}
 \address{CMUC, Department of Mathematics, University of Coimbra, 3001-501 Coimbra, Portugal}
 \email{antondenicola@gmail.com}
\author[I. Yudin]{Ivan Yudin}
 \address{CMUC, Department of Mathematics, University of Coimbra, 3001-501 Coimbra, Portugal}
 \email{yudin@mat.uc.pt}
\thanks{Research partially supported by CMUC, funded by the European program
COMPETE/FEDER, by FCT (Portugal) grants PEst-C/MAT/UI0324/2011 (A.D.N. and I.Y.), by MICINN (Spain) grants
MTM2011-15725-E, MTM2012-34478 (A.D.N.), and exploratory research project in the frame of Programa Investigador FCT IF/00016/2013.}
\begin{document}
\maketitle

\begin{abstract}
We define covariant Lie derivatives acting on vector-valued forms on Lie algebroids and study their properties. This allows us to obtain  a concise formula for the Fr\"olicher-Nijenhuis bracket on Lie algebroids.
\end{abstract}

\section{Introduction}
The Fr\"olicher-Nijenhuis calculus was developed in the seminal
article~\cite{fn} and extended to Lie algebroids in~\cite{nijenhuis-alg}. It has proven to be an indispensable tool of Differential
Geometry. Indeed, different kinds of curvatures and  obstructions to integrability are computed by the Fr\"olicher-Nijenhuis bracket.
For example, if $J:TM\to TM$ is an almost-complex structure, then $J$ is complex
structure if and only if the Nijenhuis tensor $\mathcal{N}_J=\frac12 [J,J]_{FN}$ vanishes (this is the celebrated Newlander-Nirenberg theorem \cite{nn}). If $F:TM\to TM$ is a fibrewise diagonalizable endomorphism with real eigenvalues and of constant multiplicity, then  the eigenspaces of $F$ are integrable if and only if $[F,F]_{FN}=0$ (see \cite{haantjes}). Further, if $P:TE\to TE$ is a projection operator on the tangent spaces of a fibre bundle $E\to B$, then $[P,P]_{FN}$ is a version of the Riemann curvature (see \cite{kms}, page 78).
Finally, given a Lie algebroid $\alg$ and  $N\in \Gamma(\alg^*\otimes \alg)$
such that $[N,N]_{FN} =0$, one can construct a new (deformed) Lie algebroid $\alg_N$
(cf. \cite{grabowski97,YKSM}).
Moreover, Fr\"olicher-Nijenhuis calculus is useful in geometric mechanics where it allows to give an intrinsic formulation of Euler-Lagrange equations. In this field, Lie algebroids have also been shown to be a useful tool to deal with systems with some kinds of symmetries.

In \cite{michor-87}, P. Michor obtained a short expression for the Fr\"olicher-Nijenhuis bracket on manifolds in terms of the covariant Lie derivatives. A formula for the Fr\"olicher-Nijenhuis bracket on Lie algebroids in supergeometric language was obtained by P. Antunes in \cite{paulo2010}.
In this paper we define some operators relevant for Fr\"olicher-Nijenhuis
calculus in the setting of Lie algebroids, including the covariant Lie
derivative, and study their properties. In this way we are able to extend
Michor's formula for Fr\"olicher-Nijenhuis bracket to Lie algebroids.

\section{Covariant Lie derivative on Lie algebroids}
Let  $(\alg,[\;,\;],\anch)$ be a Lie algebroid over a manifold $M$, and $E$ a vector
bundle over~$M$. We write $\Omega^k(\alg,E)=\Gamma(\wedge^k \alg^*\otimes E)$ for the space of skew-symmetric
$E$-valued \mbox{$k$-forms} on $\alg$. If $E=M\times\R$ is the trivial line bundle over $M$, we denote $\Omega^k(\alg, E)$ by~$\Omega^k(\alg)$.

We write $\Sigma_m$ for the permutation group on $\left\{ 1,\dots, m \right\}$.
For $k$ and $s$ such that $k+s = m$, we denote by $\shuffle_{k,s}$ the subset of
$(k,s)$-shuffles in $\Sigma_m$.
Thus  $\sigma\in \shuffle_{k,s}$ if and only if
\begin{align*}
\sigma (1) < \sigma (2) <\dots <\sigma (k), && \sigma (k+1) <\dots<\sigma (k+s).
\end{align*}
Similarly, for a triple $(k,l,s)$, such that $k+l+s=m$, we denote by
$\shuffle_{k,l,s}$ the subset of $(k,l,s)$-shuffles in $\Sigma_m$, that is the set of
permutations $\sigma$, such that
\begin{align*}
	&\sigma(1) < \sigma(2) < \dots < \sigma(k), \  \quad
	\sigma(k+1) < \dots < \sigma(k+l),\\
	&\sigma(k+l+1) < \dots < \sigma(k+l+s).
\end{align*}

For a $k$-form $\omega\in \Omega^k (\alg)$ and $\phi\in \Omega^p(\alg, E)$ , we
define the form $\omega \wdd \phi\in \Omega^{k+p}(\alg, E)$ by
\begin{equation*}
	\left( \omega\wdd \phi \right)\left( Z_1,\dots, Z_{p+k} \right) =
	\sum_{\sigma\in\shuffle_{k,p}} (-1)^\sigma \omega\left( Z_{\sigma
	(1)},\dots,
	Z_{\sigma (k)} \right) \phi( Z_{\sigma (k+1)}, \dots, Z_{\sigma (k+p)}
).
\end{equation*}
Here and everywhere in this paper $Z_1,\dots, Z_{p+k}$  denote arbitrary sections of the Lie algebroid $\alg$.
If $E=M\times \R$ is the trivial line bundle over $M$, we denote $\wdd$ by
$\wedge$, and $\Omega^*(\alg)$ becomes a commutative graded algebra with the
multiplication given by $\wedge$. Further, note that $\Omega^*(\alg, E)$ is an
$\Omega^*(\alg)$-module with the action given by $\wdd$. For any
$\omega\in\Omega^k(\alg)$ we define the operator $\eps_\omega$ on
$\Omega^*(\alg, E)$ by
\begin{align*}
\eps_\omega : \Omega^*(\alg, E) &\to \Omega^{*+k}(\alg, E)\\
                        \phi&\mapsto \omega\wdd \phi
\end{align*}
Sometimes, given a operator $A$ we will use $\omega \wedge A$ as an alternative
notation for $\eps_\omega A$.

Let $\phi\in \Omega^p(\alg,\alg)$. For any vector bundle $E$ over $M$, we define the operator $i_\phi$ on
$\Omega^*(\alg,E)$ by
\begin{align}
	\label{iphi}
	\left( i_\phi\psi \right)\left( Z_1,\dots,Z_{p+k} \right) &=
	\!\!\!\!\!\sum_{\sigma\in \shuffle_{p,k}} \!\! \! (-1)^\sigma \psi\left( \phi(Z_{\sigma
	(1)},\dots, Z_{\sigma (p)}), Z_{\sigma (p+1)}, \dots, Z_{\sigma (p+k)} \right)
\end{align}
where $\psi\in \Omega^{k+1}(\alg,E)$.

We say that $\cov\colon \Gamma(\alg)\times \Gamma(E) \to\Gamma( E)$ is an
\emph{$\alg$-connection} on $E$ (see \cite{rui2002}) if
\begin{enumerate}[$1)$]
	\item $\cov_X$ is an $\R$-linear endomorphism of $\Gamma(E)$;
	\item $\cov s$ is a $\smooth(M)$-linear map from $\Gamma(\alg)$ to
		$\Gamma(E)$;
	\item $\cov_X( fs) = (\anch(X)f) s + f \cov_X s$ for any
$f\in \smooth(M)$, $X\in
		\Gamma(\alg)$, and $s\in \Gamma(E)$.
\end{enumerate}
The curvature of an $\alg$-connection $\cov$ is defined by
\begin{equation*}
	R(X,Y) s := \cov_X \cov_Y s - \cov_Y \cov_X s - \cov_{[X,Y]} s.
\end{equation*}
It is easy to check that $R$ is tensorial and skew-symmetric in the first two
arguments, thus we can consider $R$ as an element of $\Omega^2(\alg,
\End(E))$, where $\End(E)$ is the endomorphism bundle of $E$.

Given an $\alg$-connection on a vector bundle $E$, we define the covariant exterior
derivative on $\Omega^*(\alg, E)$ by
\begin{align*}
	(d^\cov \phi)\left( Z_1,\dots, Z_{p+1} \right) =&
	\sum_{\sigma \in \shuffle_{1,p}}(-1)^{\sigma}
	\cov^E_{Z_{\sigma(1)}}\left( \phi( Z_{\sigma(2)},\dots,
	Z_{\sigma(p+1)}
	)
	\right)\\& - \sum_{\sigma\in \shuffle_{2,p-1}}(-1)^{\sigma}\phi\left(
	\left[ Z_{\sigma(1)},Z_{\sigma(2)}
	\right],Z_{\sigma(3)},\dots,
	Z_{\sigma(p+1)} \right).
\end{align*}
Note that $d^\cov$ is related to the  curvature $R$ of $\cov^E$ by the formula
\begin{equation*}
	((d^\cov)^2 \phi)(Z_1, \dots, Z_{p+2}) =  \sum_{\sigma\in \shuffle_{2,p}}   (-1)^\sigma
	R(Z_{\sigma(1)},Z_{\sigma(2)}) \left(\phi (Z_{\sigma(3)},\dots, Z_{\sigma(p+2)})\right).
\end{equation*}
\begin{definition}
	A \emph{derivation} of degree $k$ on $\Omega^*(\alg, E)$ is a linear map
	$D\colon \Omega^*(\alg, E) \to \Omega^{*+k}(\alg,E)$
	such that
	\begin{equation*}
		D(\omega\wdd \phi) = \overline{ D }(\omega)\wdd\phi + (-1)^{kp} \omega\wdd
		D(\phi)
	\end{equation*}
	for all $\omega\in \Omega^p(\alg)$ and $\phi\in \Omega^*(\alg,E)$, where
	$\overline{ D }\colon \Omega^*(\alg) \to \Omega^*(\alg)$ is  some map.
\end{definition}
For any derivation $D$ on $\Omega^*(\alg, E)$ and $\alpha\in\Omega^*(\alg)$, we have
\begin{equation*}
[D, \eps_{\alpha}]=\eps_{\overline{ D } \alpha}.
\end{equation*}
In particular, the map $\overline{ D }$ is unique for a given derivation
$D$ on $\Omega^*(\alg,E)$. Let $\omega_1\in \Omega^{p_1}(\alg)$, $\omega_2\in
\Omega^{p_2} (\alg)$. From the following computation
\begin{align*}
	D( (\omega_1 &\wedge \omega_2 )\wdd \phi )  = \overline{ D
	}(\omega_1\wedge \omega_2) \wdd \phi + (-1)^{k(p_1 + p_2)} \omega_1
	\wedge \omega_2 \wdd D(\phi)\\
	D(\omega_1 &\wdd (\omega_2 \wdd \phi))  = \overline{ D }(\omega_1)\wedge
	\omega_2 \wdd \phi+
	(-1)^{kp_1} \omega_1 \wdd D(\omega_2 \wdd \phi)\\
	&= \overline{ D }(\omega_1)\wedge \omega_2 \wdd \phi + (-1)^{kp_1} \omega_1 \wedge \overline{
	D }(\omega_2) \wdd \phi + (-1)^{k(p_1+p_2)} \omega_1 \wedge \omega_2
	\wdd D(\phi)
\end{align*}
 one can see that $\overline{ D }$ is a derivation on $\Omega^*(\alg)$.

It is easy to check that for any given $\phi\in \Omega^k(\alg,\alg)$, $i_\phi$  is a
derivation of degree $k-1$, and $d^\cov$ is a derivation of degree $1$ on
$\Omega^*(\alg, E)$.
The \emph{covariant Lie derivative} $\lie^\cov_\phi$ is defined as the \emph{graded
commutator} $[i_\phi, d^\cov] = i_\phi d^\cov + (-1)^{k} d^\cov i_\phi$.
The graded commutator of two derivations of degree
$k$ and $l$ is a derivation of degree $k+l$. In particular, $\lie^\cov_\phi$ is
a derivation of degree $k$ for any $\phi\in \Omega^k(\alg,\alg)$.

Suppose
 we have an $\alg$-connection $\cov$ on $\alg$. We will say that $\cov$ is torsion-free if
$	\cov_X Y - \cov_Y X  = \left[ X,Y \right]$
for all $X$, $Y\in \Gamma(\alg)$.
On every algebroid $(\alg,[\;,\;],\anch)$, there exists a torsion-free
$\alg$-connection. Namely, one can take an arbitrary
bundle metric on $\alg$ and the associated Levi-Civita connection on $\alg$.
Given $\alg$-connections $\cov^\alg$ on $\alg$ and $\cov^E$ on $E$, we define
$\cov_X s\in \Omega^p(\alg,E)$ for every $s\in \Omega^p(\alg,E)$ by
\begin{equation*}
	(\cov_X s)(Z_1,\dots, Z_p):= \cov_X^E (s(Z_1,\dots,Z_p)) -
	\sum_{t=1}^p s(Z_1,\dots, \cov_X^\alg Z_t, \dots, Z_p).
\end{equation*}
It is easy to check that for any $s\in \Omega^k(\alg,E)$,
$X\in \Gamma(\alg)$, and a torsion-free $\alg$-connection on $\alg$,  we have
$	\lie_X^\cov s = \cov_X s + i_{\cov X }s $ and $\cov X = d^\cov
X$.
In other words
$
\cov_X = \lie_X^\cov - i_{d^\cov X}$.
Motivated by this relation, we define for $\phi\in \Omega^p(\alg,\alg)$ an operator  $\cov_\phi$ on
$\Omega^*(\alg, E)$ by
\begin{equation}
	\label{covphi}
	\cov_{\phi} := \lie^\cov_\phi -(-1)^p i_{d^\cov\phi}.
\end{equation}
Note that $\cov_\phi$ depends on two connections: an $\alg$-connection on
$E$ and a torsion-free $\alg$-connection on $\alg$.
Since $\cov_\phi$ is a linear combination of two derivations of degree
$p$, we see that $\cov_\phi$ is a derivation of degree $p$.
The following proposition shows that for $s\in \Omega^*(A,E)$
the map $\cov s\colon
\Omega^*(\alg,\alg) \to \Omega^*\left(\alg,E \right)$
is a homomorphism of $\Omega^*(\alg)$-modules.
\begin{proposition}
	\label{tensor}
For any $\omega\in \Omega^p(\alg)$, $\phi\in \Omega^k(\alg,\alg)$, and
$s\in \Omega^*(\alg,E)$, we have
\begin{equation*}
	\cov_{\omega\wdd\phi} s = (\omega \wedge \cov_\phi) s = \eps_\omega
	\cov_\phi s=  \omega \wdd
	(\cov_\phi s).
\end{equation*}
\end{proposition}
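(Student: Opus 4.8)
The three right-hand expressions are literally the same object: $\eps_\omega\cov_\phi s = \omega\wdd(\cov_\phi s)$ by the definition of $\eps_\omega$, while $\omega\wedge\cov_\phi$ is by the stated convention merely a name for $\eps_\omega\cov_\phi$. Hence the entire content of the statement is the operator identity $\cov_{\omega\wdd\phi} = \eps_\omega\cov_\phi$ on $\Omega^*(\alg,E)$. The plan is to prove this by expanding $\cov_{\omega\wdd\phi}$ through the definitions $\cov_\psi = \lie^\cov_\psi - (-1)^{\deg\psi}i_{d^\cov\psi}$ and $\lie^\cov_\psi = [i_\psi,d^\cov]$, and reducing every occurrence of an insertion operator to one for $\phi$ alone, multiplied on the left by $\eps_\omega$ (or by $\eps_{\overline{d^\cov}\omega}$).

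The crucial lemma, and the only genuinely combinatorial step, is the tensoriality of the insertion operator in its form argument,
\[ i_{\omega\wdd\phi} = \eps_\omega\, i_\phi \qquad (\omega\in\Omega^p(\alg),\ \phi\in\Omega^k(\alg,\alg)). \]
I would prove this by evaluating both sides on $p+k+m$ sections. Expanding $i_{\omega\wdd\phi}\psi$ via \eqref{iphi} and then expanding $(\omega\wdd\phi)(\dots)$ produces a double sum over $\sigma\in\Sh_{p+k,m}$ and $\tau\in\Sh_{p,k}$; since $\omega$ is scalar it factors out of $\psi$, and the composite permutations range bijectively over $\Sh_{p,k,m}$ with multiplicative signs (because the $(p,k)$-reshuffling fixes the last $m$ slots). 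Expanding the right-hand side $\omega\wdd(i_\phi\psi)$ instead gives a double sum over $\Sh_{p,k+m}$ and $\Sh_{k,m}$, which reorganizes over the same set $\Sh_{p,k,m}$ with the same signs. The two expressions then coincide term by term, with no sign discrepancy.

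With the lemma in hand the remainder is formal. I would first apply the derivation rule for $d^\cov$, namely $d^\cov(\omega\wdd\phi) = \overline{d^\cov}\omega\wdd\phi + (-1)^p\omega\wdd d^\cov\phi$, and feed it into the lemma to get $i_{d^\cov(\omega\wdd\phi)} = \eps_{\overline{d^\cov}\omega}i_\phi + (-1)^p\eps_\omega i_{d^\cov\phi}$. Next, using $i_{\omega\wdd\phi}=\eps_\omega i_\phi$ together with the commutation identity $[d^\cov,\eps_\omega]=\eps_{\overline{d^\cov}\omega}$ (the special case $D=d^\cov$ of the relation $[D,\eps_\alpha]=\eps_{\overline{D}\alpha}$), a short rearrangement of the graded commutator $[i_{\omega\wdd\phi},d^\cov]$ yields
\[ \lie^\cov_{\omega\wdd\phi} = \eps_\omega\lie^\cov_\phi + (-1)^{p+k}\eps_{\overline{d^\cov}\omega}\,i_\phi. \]
Substituting both displays into $\cov_{\omega\wdd\phi} = \lie^\cov_{\omega\wdd\phi} - (-1)^{p+k}i_{d^\cov(\omega\wdd\phi)}$, the two $\eps_{\overline{d^\cov}\omega}i_\phi$ terms cancel and the survivors regroup as $\eps_\omega\bigl(\lie^\cov_\phi - (-1)^k i_{d^\cov\phi}\bigr) = \eps_\omega\cov_\phi$, which is the claim. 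The main obstacle is thus entirely concentrated in the shuffle bijection and its sign bookkeeping; once the tensoriality of the insertion operator is established, the two connections enter only through the already-defined operators $d^\cov$, $\lie^\cov_\phi$, and $i_{d^\cov\phi}$, and require no further analysis. Note also that the argument never needs to identify $\overline{d^\cov}$ explicitly, since every term containing it cancels.
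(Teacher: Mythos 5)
Your proposal is correct and follows essentially the same route as the paper: both reduce the claim to the operator identity $\cov_{\omega\wdd\phi}=\eps_\omega\cov_\phi$, expand $\lie^\cov_{\omega\wdd\phi}=[i_{\omega\wdd\phi},d^\cov]=[\eps_\omega i_\phi,d^\cov]$ using $[d^\cov,\eps_\omega]=\eps_{d\omega}$ and the derivation property of $d^\cov$, and observe that the $\eps_{d\omega}i_\phi$ terms cancel. The only difference is that you spell out a shuffle-combinatorics proof of the tensoriality $i_{\omega\wdd\phi}=\eps_\omega i_\phi$, which the paper simply uses without comment; your sign bookkeeping is also consistent throughout (the paper's displayed computation has the exponents $p$ and $k$ interchanged in two places, which your version implicitly corrects).
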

\begin{proof}
The equation
\begin{align*}
	\lie^\cov_{\omega\wdd \phi}& = \left[  i_{\omega\wdd \phi}, d^\cov \right] = \left[
	\omega\wedge i_\phi, d^\cov \right] = (-1)^{k+p}(d\omega)\wedge i_{\phi} + \omega\wedge
	\lie^\cov_\phi
\end{align*}
implies that
$\omega\wedge \lie^\cov_\phi =  \lie^\cov_{\omega\wdd \phi} -(-1)^{p+k} i_{(d\omega)\wdd
\phi}$.
	Now	we have
\begin{align*}
	\omega\wedge \cov_\phi &= \omega\wedge \lie^\cov_\phi -(-1)^p \omega \wedge
	i_{d^\cov \phi} = \lie^\cov_{\omega \wdd \phi} -(-1)^{p+k}i_{(d\omega)\wdd
	\phi} - (-1)^p  i_{\omega \wdd d^\cov \phi}
	\\&= \lie^\cov_{\omega\wdd \phi} -(-1)^{p+k} i_{d\omega \wdd \phi + (-1)^k
	\omega \wdd d^\cov \phi} = \cov_{\omega \wdd\phi}.
\end{align*}
\end{proof}
It was proven in \cite{nijenhuis-alg} that the commutator $[i_\phi,i_\psi]$ for $\phi\in
\Omega^k(\alg,\alg)$ and $\psi\in \Omega^l(\alg,\alg)$ is given by the formula
\begin{equation}
	\label{RN}
	[i_\phi,i_\psi] = i_{i_\phi \psi} - (-1)^{(k-1)(l-1)} i_{i_\psi \phi}.
\end{equation}
\begin{theorem}
	\label{icov-theorem}
Let $\cov$ be a torsion-free $\alg$-connection on $\alg$ and $\cov^E$ be an $\alg$-connection  on
a vector bundle $E$.
	For $\phi\in \Omega^k(\alg,\alg)$ and $\psi\in \Omega^l(\alg,\alg)$ we have
	on $\Omega^*(\alg,E)$
	\begin{equation}
		\label{icov-formula}
		[\cov_\phi, i_\psi] = i_{\cov_\phi \psi} - (-1)^{k(l-1)}
		\cov_{i_\psi \phi}.
	\end{equation}
\end{theorem}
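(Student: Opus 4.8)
The plan is to show that both sides of~\eqref{icov-formula} are derivations of degree $k+l-1$ on $\Omega^*(\alg,E)$ and then to compare them on a generating set. The left-hand side is the graded commutator of the derivations $\cov_\phi$ (degree $k$) and $i_\psi$ (degree $l-1$), hence a derivation of degree $k+l-1$; on the right, $\cov_\phi\psi\in\Omega^{k+l}(\alg,\alg)$ so $i_{\cov_\phi\psi}$ has degree $k+l-1$, while $i_\psi\phi\in\Omega^{k+l-1}(\alg,\alg)$ so $\cov_{i_\psi\phi}$ also has degree $k+l-1$. I would first record the reduction lemma that a derivation $D$ of $\Omega^*(\alg,E)$ is determined by its restrictions to $\Omega^0(\alg,E)=\Gamma(E)$ and $\Omega^1(\alg,E)$. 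This is a local statement: in a local frame $e$ of $E$ the Leibniz rule $D(\beta\wdd e)=\overline{D}\beta\wdd e\pm\beta\wdd De$ lets one read off $\overline{D}$ on the generators $\smooth(M)$ and $\Omega^1(\alg)$ of $\Omega^*(\alg)$ from $D$ on $\Omega^0$ and $\Omega^1$, and since $\overline D$ is a derivation this pins down $\overline D$, and hence $D$, everywhere. Thus it suffices to compare the two sides on $\Omega^0(\alg,E)$ and on $\Omega^1(\alg,E)$.

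For the action on $s\in\Gamma(E)=\Omega^0(\alg,E)$ I would use two elementary facts: every $i_\chi$ with $\deg\chi\ge 1$ annihilates $\Omega^0(\alg,E)$, and $\cov_\phi s=i_\phi\,d^\cov s$ (since $\lie^\cov_\phi=[i_\phi,d^\cov]$, $i_\phi s=0$, and $i_{d^\cov\phi}s=0$). Consequently the terms $\cov_\phi(i_\psi s)$ and $i_{\cov_\phi\psi}s$ vanish, and both sides reduce, up to the common sign $-(-1)^{k(l-1)}$, to $i_\psi i_\phi\,d^\cov s$ and $i_{i_\psi\phi}\,d^\cov s$ respectively. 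These agree by the identity $i_\psi i_\phi=i_{i_\psi\phi}$ on $1$-forms, which is immediate from~\eqref{iphi}: on a $1$-form only the trivial shuffle survives $i_\phi$, so $i_\psi i_\phi\alpha=\alpha\circ(i_\psi\phi)=i_{i_\psi\phi}\alpha$. The degenerate cases $k=0$ and $k+l-1=0$ are checked directly and are immediate.

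The main work, and the step I expect to be the principal obstacle, is the comparison on $\alpha\in\Omega^1(\alg,E)$. Here I would substitute $\cov_\phi=\lie^\cov_\phi-(-1)^k i_{d^\cov\phi}$ and expand $\lie^\cov_\phi=[i_\phi,d^\cov]$, so that the left-hand side produces terms in which $d^\cov$ acts, generating both covariant derivatives of the arguments and Lie brackets $[Z_i,Z_j]$. The delicate point is reorganising these: the bracket terms must be rewritten using the torsion-free condition $\cov_X Y-\cov_Y X=[X,Y]$ so that they recombine with the connection terms hidden in $\cov_{i_\psi\phi}$ and $i_{\cov_\phi\psi}$, while the purely algebraic contractions are controlled by~\eqref{RN} and the tensoriality of Proposition~\ref{tensor}. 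A convenient way to keep the bookkeeping under control is to observe that $D\mapsto\overline D$ is a homomorphism for the graded commutator — this follows from the graded Jacobi identity together with the relation $[D,\eps_\alpha]=\eps_{\overline D\alpha}$ recorded in the text, since $[[D,D'],\eps_\alpha]=\eps_{[\overline D,\overline{D'}]\alpha}$. Because $\overline{d^\cov}=d$ and $\overline{i_\chi}=i_\chi$, the operators $\overline{\cov_\phi}$ and $\overline{i_\psi}$ are the scalar versions of $\cov_\phi$ and $i_\psi$, so the required equality of $\overline{\textrm{LHS}}$ and $\overline{\textrm{RHS}}$ on $\Omega^1(\alg)$ is precisely~\eqref{icov-formula} for the trivial line bundle $E=M\times\R$; in that case the curvature of $\cov^E$ vanishes, removing several terms and isolating the essential calculation. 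I expect the sign bookkeeping in matching the Lie-bracket and curvature contributions, rather than any conceptual difficulty, to be the crux.
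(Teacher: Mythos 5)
Your reduction framework is sound as far as it goes: both sides of \eqref{icov-formula} are derivations of degree $k+l-1$; a derivation in the paper's sense (one admitting $\overline{D}$) is local, hence determined by its values on $\Gamma(E)$ and $\Omega^1(\alg,E)$; and your verification on $\Gamma(E)$, using $\cov_\phi s=i_\phi d^\cov s$ and $i_\psi i_\phi\alpha=i_{i_\psi\phi}\alpha$ for $\alpha\in\Omega^1(\alg,E)$, is correct. The problem is that the entire content of the theorem sits in the one step you do not carry out. On $\Omega^1(\alg,E)$ --- equivalently, after your $\overline{D}$-reduction, in the scalar case $E=M\times\R$ --- you only describe what an expansion of $[\lie^\cov_\phi-(-1)^k i_{d^\cov\phi},\,i_\psi]$ would look like and assert that the torsion-free condition will make the Lie-bracket terms recombine with the terms of $i_{\cov_\phi\psi}$ and $\cov_{i_\psi\phi}$, ending with ``I expect the sign bookkeeping \dots to be the crux.'' That is exactly the computation the theorem asks for, and it is absent; reducing the statement to a special case of itself plus an unexecuted calculation is not a proof. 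The stray reference to matching ``curvature contributions'' is a further warning sign: no curvature term appears in \eqref{icov-formula} (flatness of $\cov^E$ is needed only for Theorem~\ref{covcov-theorem}), so the terms you are anticipating do not correspond to what the expansion actually produces.

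The paper closes this gap by a different, and cheaper, reduction: instead of restricting the \emph{argument} to low degree, it decomposes the \emph{operators}. Both sides of \eqref{icov-formula} are additive in $\phi$ and $\psi$, so one may take $\phi=\alpha\wdd X$ and $\psi=\beta\wdd Y$ with $\alpha\in\Omega^k(\alg)$, $\beta\in\Omega^l(\alg)$, $X,Y\in\Gamma(\alg)$. Proposition~\ref{tensor} gives $\cov_{\alpha\wdd X}=\eps_\alpha\cov_X$, the definition gives $i_{\beta\wdd Y}=\eps_\beta i_Y$, and the commutation rules of $\eps$ with $i_Y$ and $\cov_X$ then reduce everything formally to the single identity $[\cov_X,i_Y]=i_{\cov_X Y}$ for sections, which is a three-line pointwise computation (and in which the torsion-free hypothesis does not even intervene; it is already encoded in Proposition~\ref{tensor} and in the definition of $\cov_\phi$). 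If you wish to keep your low-degree strategy, you must genuinely verify the identity on $\Omega^1(\alg,E)$, which is no shorter than the full expansion you are trying to avoid; I would instead finish your argument by switching to the tensoriality reduction at that point.
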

\begin{proof}
	First we check the claim for $\phi =X \in \Gamma(\alg)$ and $\psi = Y
	\in \Gamma(\alg)$. Let $s\in \Omega^{p+1}(\alg, E)$.
	We get
	\begin{align*}
		(\cov_X i_Ys ) (Z_1,\dots, Z_p)& = \cov_X^E (s(Y,Z_1,\dots, Z_p)) -
		\sum_{t=1}^p s (Y, Z_1, \dots, \cov_X Z_t, \dots, Z_p)
		\\& = (\cov_X s)(Y,Z_1,\dots, Z_p) + s (\cov_X Y ,Z_1,\dots,
		Z_p) \\& = (i_Y \cov_X s)(Z_1,\dots, Z_p) + (i_{\cov_X Y}
		s)(Z_1,\dots, Z_p).
	\end{align*}
Thus
$	[\cov_X,i_Y] = i_{\cov_X Y}$.
Since \eqref{icov-formula} is additive in $\phi$ and $\psi$, it is enough to
prove it for $\phi = \alpha\wdd X$, $\psi  = \beta \wdd Y$,
where $\alpha\in \Omega^k(\alg)$, $\beta\in \Omega^l(\alg)$, and $X$, $Y\in
\Gamma(\alg)$. Repeatedly using Proposition~\ref{tensor} and $	[\cov_X,i_Y] = i_{\cov_X Y}$, we get
\begin{align*}
	\left[ \cov_{\alpha\wdd X}, i_{\beta \wdd Y} \right] & = \left[ \alpha
	\wedge \cov_X , \beta \wedge i_Y
	\right] = \left[ \eps_\alpha, \beta \wedge i_Y \right] \cov_X +
	\eps_\alpha \left[ \cov_X ,\beta \wedge i_Y \right] \\& =
	(-1)^{kl}\eps_\beta \left[ \eps_\alpha, i_Y \right] \cov_X + \eps_\alpha
	[\cov_X , \eps_\beta] i_Y + \eps_\alpha \eps_\beta [\cov_X, i_Y] \\ & =
	-(-1)^{kl-l} \eps_\beta \eps_{i_Y \alpha} \cov_X + \eps_\alpha \eps_{\cov_X
	\beta} i_Y + \eps_\alpha \eps_\beta i_{\cov_X Y}
	\\ & =
	i_{\alpha \wedge \cov_X \beta \wdd Y + \alpha \wedge \beta \wdd \cov_X Y } +
	(-1)^{(k-1)l} \cov_{\beta \wedge i_Y \alpha \wdd X}
	\\ & =
	i_{\alpha \wdd \cov_X (\beta \wdd Y)}
	+ (-1)^{(k-1)l} \cov_{\beta \wedge i_Y (\alpha \wdd X)}
	\\ & = i_{\cov_{\alpha \wdd X} ( \beta \wdd Y)} + (-1)^{(k-1)l} \cov_{
	i_{\beta \wdd Y} ( \alpha \wdd X)}.
\end{align*}
\end{proof}
To formulate the next result, we extend the definition of $R$ by defining for any $\phi\in \Omega^k(\alg,\alg)$ and $\psi\in \Omega^l(\alg,\alg)$ the form $R(\phi,\psi) \in \Omega^{k+l+1}(\alg,\alg)$
as follows
\begin{multline*}
	R(\phi,\psi)  (Y_1,\dots, Y_{k+l+1}) =\\=
	\sum_{\sigma\in \shuffle_{k,l,1}} R(\phi(Y_{\sigma(1)}, \dots,
	Y_{\sigma(p)}), \psi(Y_{\sigma(p+1)}, \dots, Y_{\sigma(p+q)}))
	Y_{\sigma(p+q+1)}.
\end{multline*}
\begin{theorem}
	\label{covcov-theorem}
	Let $\cov$ be a torsion-free $\alg$-connection on $\alg$ and $\cov^E$ a flat $\alg$-connection on a vector bundle $E$ over $M$ (i.e. $\cov^E$ is a representation of $\alg$).
	Then for any $\phi\in \Omega^k(\alg,\alg)$, $\psi\in \Omega^l(\alg,\alg)$, we have  the  following equality on $\Omega^*(\alg,E)$
\begin{equation}
	\label{covcov-formula}
	[\cov_\phi, \cov_\psi] = \cov_{\cov_\phi \psi} - (-1)^{kl}
	\cov_{\cov_\psi \phi} - i_{R(\phi, \psi)}.
\end{equation}
\end{theorem}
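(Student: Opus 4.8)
The plan is to mirror the structure of the proof of Theorem~\ref{icov-theorem}: first establish the formula for sections $\phi=X$, $\psi=Y\in\Gamma(\alg)$ (the case $k=l=0$), and then bootstrap to the general case by additivity in $\phi$ and $\psi$ together with the $\Omega^*(\alg)$-linearity of $\cov_{(-)}$ supplied by Proposition~\ref{tensor}.

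For the base case I would evaluate $([\cov_X,\cov_Y]s)(Z_1,\dots,Z_p)$ for $s\in\Omega^p(\alg,E)$ directly from the defining formula for $\cov_X$ on $\Omega^*(\alg,E)$. Since $\cov_X$ has degree $0$, the graded commutator is the ordinary one, and $[\cov_X,\cov_Y]-\cov_{[X,Y]}$ is precisely the curvature of the induced connection on $\wedge^p\alg^*\otimes E$. The Leibniz rule splits this curvature into a piece acting on the $E$-factor, namely $R^E(X,Y)(s(Z_1,\dots,Z_p))$, and a piece acting on the $\alg$-factors, namely $-\sum_t s(Z_1,\dots,R(X,Y)Z_t,\dots,Z_p)$, where $R$ is the curvature of $\cov$ on $\alg$. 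Here the flatness of $\cov^E$ is essential: it kills the $E$-term, and the surviving $\alg$-term is exactly $-(i_{R(X,Y)}s)(Z_1,\dots,Z_p)$, as one checks by moving the inserted argument back to its original slot using skew-symmetry. This gives $[\cov_X,\cov_Y]=\cov_{[X,Y]}-i_{R(X,Y)}$, and the torsion-free condition $\cov_X Y-\cov_Y X=[X,Y]$ together with additivity of $\cov_{(-)}$ rewrites $\cov_{[X,Y]}=\cov_{\cov_X Y}-\cov_{\cov_Y X}$, which is the claimed identity when $k=l=0$ (both the sign $(-1)^{kl}$ and $R(\phi,\psi)$ reducing correctly).

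For the general case I would reduce to $\phi=\alpha\wdd X$ and $\psi=\beta\wdd Y$ with $\alpha\in\Omega^k(\alg)$, $\beta\in\Omega^l(\alg)$, $X,Y\in\Gamma(\alg)$. By Proposition~\ref{tensor}, $\cov_\phi=\eps_\alpha\cov_X$ and $\cov_\psi=\eps_\beta\cov_Y$, so I would expand $[\eps_\alpha\cov_X,\eps_\beta\cov_Y]$ with the graded Leibniz rule for commutators, feeding in $[\eps_\alpha,\eps_\beta]=0$, the relations $[\cov_X,\eps_\beta]=\eps_{\cov_X\beta}$ and $[\cov_Y,\eps_\alpha]=\eps_{\cov_Y\alpha}$ (instances of $[D,\eps_\gamma]=\eps_{\overline D\gamma}$ for the degree-$0$ derivations $\cov_X,\cov_Y$), and the base case for $[\cov_X,\cov_Y]$. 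Collecting terms yields $\eps_\alpha\eps_{\cov_X\beta}\cov_Y+\eps_\alpha\eps_\beta(\cov_{\cov_X Y}-\cov_{\cov_Y X}-i_{R(X,Y)})-(-1)^{kl}\eps_\beta\eps_{\cov_Y\alpha}\cov_X$. On the other side, expanding $\cov_X(\beta\wdd Y)=(\cov_X\beta)\wdd Y+\beta\wdd\cov_X Y$ (and the same with the pairs swapped) and reapplying Proposition~\ref{tensor} gives $\cov_{\cov_\phi\psi}=\eps_\alpha\eps_{\cov_X\beta}\cov_Y+\eps_\alpha\eps_\beta\cov_{\cov_X Y}$ and $\cov_{\cov_\psi\phi}=\eps_\beta\eps_{\cov_Y\alpha}\cov_X+(-1)^{kl}\eps_\alpha\eps_\beta\cov_{\cov_Y X}$, where graded commutativity $\eps_\beta\eps_\alpha=(-1)^{kl}\eps_\alpha\eps_\beta$ is used. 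Matching the two sides then reduces the theorem to the single combinatorial identity $R(\alpha\wdd X,\beta\wdd Y)=(\alpha\wedge\beta)\wdd R(X,Y)$, which, via $i_{\omega\wdd\chi}=\eps_\omega i_\chi$, turns the last term into $i_{R(\phi,\psi)}=\eps_\alpha\eps_\beta i_{R(X,Y)}$.

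I expect the main obstacle to be twofold. The conceptual point is the base case: one must recognize $[\cov_X,\cov_Y]-\cov_{[X,Y]}$ as a curvature and see precisely why flatness of $\cov^E$ is what isolates the single insertion operator $i_{R(X,Y)}$—without flatness an irreducible $E$-valued term would survive and no identity of this shape could hold. The computational point is the bookkeeping: tracking the signs $(-1)^{kl}$ through the graded commutator expansion and verifying the shuffle combinatorics behind $R(\alpha\wdd X,\beta\wdd Y)=(\alpha\wedge\beta)\wdd R(X,Y)$, where the $\Sh_{k,l,1}$-summation defining $R(\phi,\psi)$ must factor compatibly with the $\wdd$-product.
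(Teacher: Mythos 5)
Your proposal is correct and follows essentially the same route as the paper: the scalar base case $[\cov_X,\cov_Y]=\cov_{\cov_XY}-\cov_{\cov_YX}-i_{R(X,Y)}$ (the paper obtains it by direct expansion and antisymmetrization rather than by invoking the Leibniz splitting of the induced curvature, but the computation is the same), followed by reduction to $\phi=\alpha\wdd X$, $\psi=\beta\wdd Y$ via Proposition~\ref{tensor} and the graded Leibniz rule for commutators. The final identity $R(\alpha\wdd X,\beta\wdd Y)=(\alpha\wedge\beta)\wdd R(X,Y)$ that you flag is also exactly how the paper concludes.
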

\begin{proof}
	First we prove \eqref{covcov-formula} for $\phi = X, \psi=Y \in \Gamma(\alg)$. For $s \in \Omega^p(\alg)$, we get
	\begin{align*}
(\cov_X \cov_Y& s) (Z_1,\dots, Z_p) \! =\! \cov_X^E ( \cov_Y^E s (Z_1,\dots, Z_p))
- \!\sum_{s=1}^p\! \cov_Y^E s (Z_1, \dots, \cov_X Z_s ,\dots, Z_p)
\\& = \cov_X^E\cov_Y^E (s(Z_1,\dots, Z_p)) - \sum_{s=1}^p  \cov_X^E(s(Z_1, \dots,
\cov_Y Z_s, \dots, Z_p)) \\& \phantom{=} - \sum_{s=1}^p\cov_Y^E (s(Z_1,\dots,
\cov_X Z_s, \dots, Z_p) + \sum_{s=1}^p s( Z_1, \dots, \cov_Y
\cov_X Z_s, \dots, Z_p) \\ &\phantom{=}
+ \sum_{s\not=t} s(Z_1, \dots, \cov_Y Z_t, \dots, \cov_X Z_s, \dots, Z_p).
	\end{align*}
	By anti-symmetrization of the above formula in $X$ and $Y$ and
	 using that $\cov^E$  is flat, we get
	\begin{align*}
		\left[ \cov_X,\!\! \cov_Y \right] s (Z_1,\dots, Z_p) &  =
		\!\!\cov_{[X,\!Y]}^E( s (Z_1,\dots,
Z_p))
-\!\! \sum_{s=1}^p s(Z_1, \dots, [\cov_X,\! \!\cov_Y] Z_s,
\dots, Z_p).
	\end{align*}
	Further
	\begin{align*}
		(\cov_{\cov_X Y} - \cov_{\cov_Y X}) s (Z_1,\dots, Z_p) & =
		\cov_{\cov_X Y - \cov_Y X}^E(s (Z_1,\dots, Z_p))
		\\& \phantom{=}
		- \sum_{s=1}^p s (Z_1, \dots, (\cov_{\cov_X Y} -
		\cov_{\cov_Y X})Z_s, \dots, Z_p).
	\end{align*}
	Taking the difference of  the last two formulas and using the definition of $R$ and
	that $\cov$  torsion-free, we have
	\begin{equation*}
		(([\cov_X,\!\!\cov_Y] - \cov_{\cov_X Y} + \cov_{\cov_Y X}) s)
		(Z_1,\dots, Z_p) = (-i_{R(X,Y)} s)(Z_1,\dots, Z_p).
	\end{equation*}
	Since \eqref{covcov-formula} is additive in $\phi$ and $\psi$, it is
	enough to prove it for $\phi = \alpha\wdd X$ and $\psi= \beta \wdd Y$,
	where $\alpha\in \Omega^k(\alg)$, $\beta \in \Omega^l(\alg)$, and
	$X$, $Y\in \Gamma(\alg)$. Using the already proved case and
	Proposition~\ref{tensor}, we get
	\begin{align*}
		[\cov_{\alpha\wdd X} ,\! \cov_{\beta \wdd Y}] & = [ \alpha
		\wedge \cov_X, \beta \wedge \cov_Y] = [\eps_\alpha, \beta \wedge
		\cov_Y] \cov_X + \eps_\alpha [\cov_X, \beta \wedge \cov_Y] \\& =
		(-1)^{kl}\eps_\beta [\eps_\alpha, \cov_Y] \cov_X + \eps_\alpha [\cov_X,
		\eps_\beta] \cov_Y + \eps_\alpha \eps_\beta [\cov_X, \cov_Y]
		 \\ & =
		 - (-1)^{kl}\eps_\beta \eps_{\cov_Y \alpha} \cov_X + \eps_\alpha
		\eps_{\cov_X \beta } \cov_Y + \eps_\alpha \eps_\beta
		(\cov_{\cov_X Y} - \cov_{\cov_Y X} - i_{R(X,Y)}).
	\end{align*}
	Repeatedly using Proposition~\ref{tensor}, we see that $[\cov_{\alpha\wdd X} ,\!
	\cov_{\beta \wdd Y}]$ can be written as  $ \cov_\theta + i_\tau $, where
	\begin{align*}
		\theta &= - (-1)^{kl}\beta \wedge \cov_Y\alpha \wdd X + \alpha \wedge \cov_X \beta \wdd Y +
\alpha \wedge \beta \wdd \cov_X Y - \alpha \wedge \beta \wdd \cov_Y X
\\& = \alpha \wdd\,\, \cov_X (\beta\, \wdd Y)   - (-1)^{kl} (\beta\, \wdd\,\, \cov_Y
(\alpha \wdd X))  = \cov_\phi \psi - (-1)^{kl} \cov_\psi \phi
	\end{align*}
	and
	\begin{equation*}
\tau = - \alpha \wedge \beta \wdd R(X,Y) = - R(\alpha \wdd X, \beta \wdd Y) = -
R(\phi, \psi).
	\end{equation*}
	This finishes the proof.
\end{proof}
Note that the connection $\cov_X^\rho f := \rho(X)f$ defined on the trivial line
bundle $M\times \R \to M$ is obviously flat. Thus \eqref{covcov-formula} holds
on $\Omega^*(\alg)$, if $\cov$ is defined via $\cov^\rho$ and any torsion-free
connection  on $\alg$.
\section{The Fr\"olicher-Nijenhuis bracket on Lie algebroids}
In \cite{nijenhuis-alg}, Nijenhuis defined the Fr\"olicher-Nijenhuis bracket on Lie algebroids of $\phi\in \Omega^k(\alg,\alg)$ and $\psi\in \Omega^l(\alg,\alg)$ by an equality of operators on $\Omega^*(\alg)$ equivalent to
\begin{equation}\label{def-fn}
	[\lie^\cov_{\phi},i_\psi] = i_{[\phi,\psi]_{FN}}- (-1)^{k(l-1)} \lie^\cov_{i_\psi \phi}.
\end{equation}
He also obtained a formula for computing $[\phi,\psi]_{FN}$.
In the next theorem we give an alternative formula using the covariant Lie derivatives, which extends the one obtained in \cite{michor-87} to the Lie algebroids setting.
\begin{theorem}
	\label{main}
Let $\phi\in \Omega^k(\alg,\alg)$ and $\psi\in \Omega^l(\alg,\alg)$. Suppose $\cov$ be a
torsion-free $\alg$-connection on $\alg$.  Then
\begin{equation*}
	[\phi,\psi]_{FN} = \lie^\cov_{\phi}\psi - (-1)^{kl} \lie^\cov_{\psi} \phi.
\end{equation*}
\end{theorem}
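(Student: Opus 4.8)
The plan is to start from Nijenhuis's defining identity~\eqref{def-fn}, which I read as
\begin{equation*}
	i_{[\phi,\psi]_{FN}} = [\lie^\cov_\phi, i_\psi] + (-1)^{k(l-1)}\lie^\cov_{i_\psi\phi}
\end{equation*}
as operators on $\Omega^*(\alg)$, where $\lie^\cov$ is formed with the flat connection $\cov^\anch$ on the trivial line bundle. Since the assignment $\chi\mapsto i_\chi$ is injective, it suffices to show that the right-hand side equals $i_{\lie^\cov_\phi\psi - (-1)^{kl}\lie^\cov_\psi\phi}$; the claimed formula then follows by stripping the $i$. The whole computation is driven by relation~\eqref{covphi}, which I use in the form $\lie^\cov_\chi = \cov_\chi + (-1)^{\deg\chi} i_{d^\cov\chi}$, trading covariant Lie derivatives for the operators $\cov_\chi$ and $i_{d^\cov\chi}$ whose commutators are already under control.

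First I would expand $[\lie^\cov_\phi, i_\psi] = [\cov_\phi, i_\psi] + (-1)^k [i_{d^\cov\phi}, i_\psi]$. To the first summand I apply Theorem~\ref{icov-theorem}, obtaining $i_{\cov_\phi\psi} - (-1)^{k(l-1)}\cov_{i_\psi\phi}$. To the second, noting that $d^\cov\phi\in\Omega^{k+1}(\alg,\alg)$, I apply the Nijenhuis identity~\eqref{RN}, obtaining $i_{i_{d^\cov\phi}\psi} - (-1)^{k(l-1)}i_{i_\psi d^\cov\phi}$. Adding the leftover term $(-1)^{k(l-1)}\lie^\cov_{i_\psi\phi}$, the two contributions involving $i_\psi\phi$ combine, via $\cov_{i_\psi\phi} = \lie^\cov_{i_\psi\phi} - (-1)^{k+l-1}i_{d^\cov(i_\psi\phi)}$, into a single insertion operator $i_{d^\cov(i_\psi\phi)}$ with an explicit sign.

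At this point the right-hand side is a sum of four insertion operators, so injectivity of $\chi\mapsto i_\chi$ yields
\begin{equation*}
	[\phi,\psi]_{FN} = \cov_\phi\psi + (-1)^k i_{d^\cov\phi}\psi - (-1)^{kl} i_\psi d^\cov\phi + (-1)^{kl+l-1} d^\cov(i_\psi\phi).
\end{equation*}
The last step is to read this off as the claimed expression: the first two terms are exactly $\lie^\cov_\phi\psi$ by~\eqref{covphi}, while the last two equal $-(-1)^{kl}\bigl(i_\psi d^\cov\phi + (-1)^l d^\cov(i_\psi\phi)\bigr) = -(-1)^{kl}\lie^\cov_\psi\phi$, because $\lie^\cov_\psi = [i_\psi, d^\cov] = i_\psi d^\cov + (-1)^l d^\cov i_\psi$ by the very definition of the covariant Lie derivative.

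I expect the main obstacle to be neither the structure nor any hard analysis, but the disciplined sign bookkeeping together with keeping straight which object each operator acts on: in~\eqref{def-fn} everything acts on scalar forms in $\Omega^*(\alg)$ with the flat line-bundle connection, whereas the vector-valued forms $\cov_\phi\psi$, $d^\cov\phi$, $i_\psi\phi$, and $d^\cov(i_\psi\phi)$ live in $\Omega^*(\alg,\alg)$ and are built from the chosen torsion-free connection on $\alg$. A useful consistency check, and a byproduct of the theorem, is that the right-hand side $\lie^\cov_\phi\psi - (-1)^{kl}\lie^\cov_\psi\phi$ must be independent of the torsion-free connection, since the left-hand side is.
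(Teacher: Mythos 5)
Your proposal is correct and follows essentially the same route as the paper's own proof: expand $[\lie^\cov_\phi,i_\psi]$ via \eqref{covphi}, apply Theorem~\ref{icov-theorem} and \eqref{RN}, convert $\cov_{i_\psi\phi}$ back into $\lie^\cov_{i_\psi\phi}$ plus an insertion term, and compare with \eqref{def-fn} using injectivity of $\chi\mapsto i_\chi$. The signs all check out, and your closing remark about which connection acts on which space matches the paper's setup (the flat connection $\cov^\anch$ on the trivial line bundle for the operators on $\Omega^*(\alg)$, the torsion-free connection on $\alg$ for the vector-valued forms).
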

\begin{proof}
By \eqref{covphi} we have
\begin{align*}
	[\lie^\cov_{\phi},i_\psi] = [\cov_{\phi} + (-1)^k i_{d^\cov \phi},i_\psi] =[\cov_{\phi} , i_\psi] + (-1)^k [i_{d^\cov \phi},i_\psi].
\end{align*}
Hence, using \eqref{RN} and \eqref{icov-formula} we get
\begin{align*}
	[\lie^\cov_{\phi},i_\psi] = i_{\cov_\phi \psi} - (-1)^{k(l-1)}\nabla_{i_\psi \phi}+ (-1)^k i_{i_{d^\cov \phi} \psi} - (-1)^{kl}i_{i_\psi d^\cov \phi}.
\end{align*}
Next, using \eqref{covphi} in the second summand we have
\begin{align*}
	[\lie^\cov_{\phi},i_\psi] =& - (-1)^{k(l-1)}\left( \lie^\cov_{i_\psi \phi} -(-1)^{k+l-1} i_{d^\cov i_\psi \phi}  \right)\\
         &+  i_{\cov_\phi \psi} +(-1)^k i_{i_{d^\cov \phi} \psi} - (-1)^{kl}i_{i_\psi d^\cov \phi}.
\end{align*}
Notice that the subscripts of $\lie^\cov$ in \eqref{def-fn} and in the above formula are the same. Hence, due to the injectivity of $\phi\mapsto i_\phi$, we get by comparing the subscripts of $i$ that
\begin{align*}
         [\phi,\psi]_{FN}=&  (-1)^{k(l-1)}(-1)^{k+l-1} {d^\cov i_\psi \phi}+  {\cov_\phi \psi} +(-1)^k {i_{d^\cov \phi} \psi} - (-1)^{kl}{i_\psi d^\cov \phi}\\
         =&  {\cov_\phi \psi} +(-1)^k {i_{d^\cov \phi} \psi}
           - (-1)^{kl}({i_\psi d^\cov \phi} - (-1)^{l-1} {d^\cov i_\psi \phi} )\\
\end{align*}
Finally, using the definitions of $\cov_\phi$ and of $\lie^\cov_\psi$ we get the
claimed result.
\end{proof}
\bibliography{goldberg}
\bibliographystyle{plain}
\end{document}